\theoremstyle{plain}
\newtheorem{theorem}{Theorem}[section]
\newtheorem{lemma}[theorem]{Lemma}
\newtheorem{proposition}[theorem]{Proposition}
\newtheorem{corollary}[theorem]{Corollary}
\newtheorem*{theoremA}{Theorem A}
\newtheorem*{theoremB}{Theorem B}
\newtheorem*{theoremC}{Theorem C}
\theoremstyle{definition}
\newtheorem{definition}[theorem]{Definition}
\newtheorem{example}[theorem]{Example}
\DeclareMathOperator*{\Dr}{Dr}
\DeclareMathOperator*{\Cr}{Cr}
\newcommand{\N}{\mathbb{N}}
\renewcommand{\P}{\mathbb{P}}
\newcommand{\Z}{\mathbb{Z}}
\newcommand{\CC}{\mathcal{C}}
\newcommand{\LL}{\mathcal{L}}
\newcommand{\UU}{\mathcal{U}}
\title{Profinite groups with complemented closed subgroups}
\author{Gustavo A.\ Fern\'andez-Alcober}
\address{Gustavo A. Fern\'{a}ndez-Alcober: Department of Mathematics, University of the Basque Country UPV/EHU, 48080 Bilbao, Spain}
\email{gustavo.fernandez@ehu.eus}
\author{Giulia Sabatino}
\address{Giulia Sabatino: Department of Mathematics, University of the Basque Country UPV/EHU, 48080 Bilbao, Spain}
\email{giul.sabatino@gmail.com}
\thanks{\\[-5pt] Both authors are supported by the Spanish Government, grant PID2020-117281GB-I00, partly with FEDER funds.
The first author is also supported by the Basque Government, grant IT483-22.
The second author is a member of GNSAGA (INdAM) and AGTA-Advances in
Group Theory and Applications (http://www.advgrouptheory.com).}
\keywords{Profinite groups, complemented subgroups, closed subgroups}
\subjclass[2020]{20E18, 20E15}
\date{}
\begin{document}

\begin{abstract}
A group $G$ is said to be a $C$-group if every subgroup $H$ has a permutable complement, i.e.\ if there exists a subgroup
$K$ of $G$ such that $G=HK$ and $H \cap K=1$.
In this paper, we study the profinite counterpart of this concept.
We say that a profinite group $G$ is profinite-$C$ if every closed subgroup admits a closed permutable complement.
We first give some equivalent variants of this condition and then we determine the structure of profinite-$C$ groups:
they are the semidirect products $G=B\ltimes A$ of two closed subgroups $A=\Cr_{i\in I} \, \langle a_i \rangle$ and
$B=\Cr_{j\in J} \, \langle b_j \rangle$ that are cartesian products of cyclic groups of prime order, and with every
$\langle a_i \rangle$ normal in $G$.
Finally, we show that a profinite-$C$ group is a $C$-group if and only if it is torsion and
$|G:Z(G)\overline{G'}|<\infty$.
\end{abstract}

\maketitle

\section{Introduction}

Given a group $G$ and a subgroup $H$ of $G$, a \emph{complement} of $H$ in $G$ is a subgroup $K$ such that $G=\langle H,K \rangle$ and $H\cap K=1$, and $K$ is a \emph{permutable complement} of $H$ in $G$ if we further have $G=HK$.
(Permutable complements are called just complements in some textbooks in group theory; here we stick to the terminology
used in R.\ Schmidt's book \cite{Sch}.)
We say that $G$ is a \emph{$C$-group} if every subgroup of $G$ has a permutable complement.
Finite $C$-groups were studied by Philip Hall \cite{H}, who characterized them as the groups that are isomorphic
to a subgroup of the direct product of two groups of squarefree order.
Later on, N.V.\ \v{C}ernikova \cite{C} showed that an arbitrary group $G$ is a $C$-group if and only if
$G=B\ltimes A$ is a semidirect product, where $B=\Dr_{j\in J} \, \langle b_j \rangle$ and $A=\Dr_{i\in I} \, \langle a_i \rangle$
are both direct products of cyclic groups of prime order and $\langle a_i \rangle \trianglelefteq G$ for every $i\in I$.
The reader can find a proof of this result, together with other relevant facts regarding $C$-groups in Section 3.2
of R.\ Schmidt's book \cite{Sch}.

In the realm of profinite groups, closed subgroups play a special role, being those subgroups that are again profinite.
For this reason, many problems regarding subgroups in profinite groups are formulated by considering only closed subgroups.
In this paper, we address the following question: what are the profinite groups in which every \emph{closed} subgroup admits a permutable complement?

A related question is this: if every closed subgroup admits a permutable complement, does it also admit a \emph{closed} permutable complement?
For example, let $p$ be a fixed prime and let $G$ be the cartesian product of countably many copies of the cyclic group of order
$p$.
It is well-known that $G$ has maximal subgroups that are not open.
If $M$ is such a subgroup then $M$ has a permutable complement $H$ of order $p$ in $G$.
Hence $H$ is a closed subgroup of $G$ having a non-closed permutable complement.
This does not imply, of course, that $H$ cannot have a closed permutable complement.

Our first theorem clarifies the situation.

\begin{theoremA}
\label{theoremA}
Let $G$ be a profinite group.
Then the following are equivalent:
\begin{enumerate}
\item
Every closed subgroup of $G$ has a closed permutable complement.
\item
Every closed subgroup of $G$ has a permutable complement.
\item
Every open subgroup of $G$ has a permutable complement.
\item
Every open subgroup of $G$ has a closed permutable complement.
\item
If $\mathcal{N}$ is a fundamental system of neighborhoods of the identity consisting of open
normal subgroups of $G$, then $G/N$ is a $C$-group for every $N\in\mathcal{N}$.
\item
$G$ is an inverse limit of finite $C$-groups.
\end{enumerate}
\end{theoremA}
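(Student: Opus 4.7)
The plan is to close the loop (i) $\Rightarrow$ (ii) $\Rightarrow$ (iii) $\Rightarrow$ (iv) $\Rightarrow$ (v) $\Leftrightarrow$ (vi) $\Rightarrow$ (i). The implications (i)--(iii) are trivial. For (iii) $\Rightarrow$ (iv), if $H$ is open and $K$ is any permutable complement, then $k \mapsto kH$ is a bijection $K \to G/H$ (the fibres being trivial since $H \cap K = 1$), so $|K| = [G:H]$ is finite and $K$ is automatically closed in the Hausdorff group $G$.

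For (iv) $\Rightarrow$ (v), every subgroup of $G/N$ is of the form $H/N$ for some open $H \supseteq N$; a closed (necessarily finite) permutable complement $K$ of $H$ in $G$ then yields a permutable complement $KN/N$ of $H/N$ in $G/N$---the crucial triviality of the intersection follows from $N \leq H$, because any $x = kn \in H \cap KN$ forces $k = xn^{-1} \in H \cap K = 1$, so $x \in N$. The equivalence (v) $\Leftrightarrow$ (vi) is standard profinite theory via $G \cong \varprojlim_{N \in \NN} G/N$, combined with the closure of the class of finite $C$-groups under quotients (a fact from \cite{Sch}).

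The main implication is (v) $\Rightarrow$ (i), which I would establish via a minimality argument. Given a closed subgroup $H$ of $G$, let $\mathcal{T}$ denote the collection of closed subgroups $K$ of $G$ with $HK = G$, ordered by inclusion. A compactness argument shows that for any chain $\{K_\alpha\}$ in $\mathcal{T}$, the intersection $\bigcap_\alpha K_\alpha$ is again in $\mathcal{T}$: for each $g \in G$, the sets $Hg \cap K_\alpha$ form a filtered family of nonempty closed subsets of the compact set $Hg$, so they have nonempty intersection $Hg \cap \bigcap_\alpha K_\alpha$. Zorn's lemma then yields a minimal element $K^* \in \mathcal{T}$. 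Suppose for contradiction that $H \cap K^* \neq 1$, pick $1 \neq x \in H \cap K^*$, and choose $N \in \NN$ with $x \notin N$, so $H \cap K^* \not\leq N$. The quotient $K^*/(K^* \cap N) \cong K^* N / N$ embeds into the finite $C$-group $G/N$ and is therefore itself a $C$-group (finite $C$-groups being closed under subgroups). Consequently $(H \cap K^*)(K^* \cap N)/(K^* \cap N)$ admits a permutable complement $M_N/(K^* \cap N)$ in $K^*/(K^* \cap N)$; lifting gives $K^* = (H \cap K^*)M_N$, whence $HM_N = HK^* = G$ and so $M_N \in \mathcal{T}$. Since $H \cap K^* \not\leq N$, this complement is proper, yielding $M_N \subsetneq K^*$ and contradicting minimality. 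The subtle point I expect is that this reduction to $K^*/(K^* \cap N)$ works regardless of whether $K^*$ is open in $G$: the key is that $K^*/(K^* \cap N)$ is always a subgroup of the finite $C$-group $G/N$.
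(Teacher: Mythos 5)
Your proof is correct, and the peripheral implications ((i)$\Rightarrow$(ii)$\Rightarrow$(iii), (iii)$\Rightarrow$(iv) via finiteness of the complement, (iv)$\Rightarrow$(v) via the computation $H\cap KN\le N$, and (v)$\Leftrightarrow$(vi)) match the paper's treatment. Where you genuinely diverge is in the main implication: the paper deduces (vi)$\Rightarrow$(i) as a special case of Theorem~\ref{cartesian product of profinite-C}, which shows by transfinite induction that an arbitrary cartesian product of profinite-$C$ groups is profinite-$C$, building a descending transfinite chain of complements $K_\mu$ modulo the partial kernels $I_\mu$ and invoking \cref{direct product of profinite-C} and \cref{supplements and complements} at successor steps. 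You instead run a single Zorn's lemma argument on the poset $\mathcal{T}$ of closed supplements of $H$: the compactness argument for chains (the sets $Hg\cap K_\alpha$ have the finite intersection property in the compact set $Hg$) is exactly the content of \cite[Proposition 2.1.4(a)]{RZ} that the paper also uses at limit ordinals, and your minimality-plus-finite-quotient contradiction is sound --- note only that $M_N$ contains the open subgroup $K^*\cap N$ of $K^*$ and is therefore closed in $G$, so it does lie in $\mathcal{T}$. Your route is shorter and more self-contained for Theorem A itself (it essentially proves the ``minimal supplement is a complement'' statement of \cref{supplements and complements}(i) under the weaker hypothesis (v) alone); the paper's detour through \cref{cartesian product of profinite-C} is longer but that theorem is needed independently later (for \cref{completion of C-group is profinite-C} and the example following Theorem B), so the authors get Theorem A essentially for free once it is established.
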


If a profinite group $G$ satisfies condition (i) in Theorem A (and so all of the other equivalent conditions),
we say that $G$ is a \emph{profinite-$C$ group}.
Our next goal is to describe the structure of profinite-$C$ groups.
To this purpose, it is convenient to define when a profinite group $G$ is an \emph{internal cartesian product} of a family
$\{H_i\}_{i\in I}$ of its subgroups (see \cref{dfn internal cartesian} below).
Since such an internal cartesian product will be isomorphic to the external cartesian product $\Cr_{i\in I} \, H_i$,
we write $G=\Cr_{i\in I} \, H_i$ also for the internal product.

\begin{theoremB}
\label{theoremB}
Let $G$ be a profinite group.
Then the following are equivalent:
\begin{enumerate}
\item
$G$ is a profinite-$C$ group.
\item
$G=B\ltimes A$, where $B=\Cr_{j\in J} \, \langle b_j \rangle$ and $A=\Cr_{i\in I} \, \langle a_i \rangle$ are closed subgroups of $G$, all elements $b_j$ and $a_i$ are of prime order, and
$\langle a_i \rangle \trianglelefteq G$ for every $i\in I$.
\end{enumerate}
Furthermore, if these properties hold then $A$ can be chosen to be $\overline{G'}$, the topological closure
of $G'$.
\end{theoremB}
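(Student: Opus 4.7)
The plan is to establish the two directions separately: the forward direction (ii) $\Rightarrow$ (i) is routine, while the converse carries the substance of the theorem.

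For (ii) $\Rightarrow$ (i), I would invoke Theorem A(v) and verify that each finite quotient $G/N$ is a finite $C$-group. Open normal subgroups of the form $N = N_A \cdot N_B$, where $N_A$ is open in $A$ and $G$-invariant and $N_B$ is open in $B$ chosen to act trivially on $A/N_A$, form a fundamental system of neighbourhoods of the identity. For such $N$, the quotient $G/N$ is a finite semidirect product of two elementary abelian groups in which the images of the $\langle a_i\rangle$ are still cyclic of prime order and normal, so \v{C}ernikova's theorem gives that $G/N$ is a finite $C$-group.

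For (i) $\Rightarrow$ (ii), I would set $A = \overline{G'}$ from the start, so that the ``furthermore'' is built in. Being closed and normal, $A$ has a closed complement $B$ by the profinite-$C$ assumption, hence $G = B \ltimes A$. To describe $B \cong G/\overline{G'}$, I would observe that each $G/N$ is a finite $C$-group, hence of squarefree exponent by \v{C}ernikova's characterization, and so is $G/N\overline{G'}$; therefore $G/\overline{G'}$ is a profinite abelian group all of whose finite quotients have squarefree exponent. Decomposing by primes and using that a profinite elementary abelian $p$-group is isomorphic to a cartesian product of copies of the cyclic group of order $p$, one obtains $B = \Cr_{j \in J} \, \langle b_j\rangle$ with each $b_j$ of prime order.

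The crux of the proof is the $G$-invariant decomposition of $A$. First, $A$ is abelian: for any $g,h \in \overline{G'}$ the image $[g,h]N/N$ lies in $(G/N)'$, which sits inside the abelian \v{C}ernikova ``$A$-part'' of $G/N$, so $[g,h] \in N$ for every open normal $N$, forcing $[g,h] = 1$. The squarefree-exponent argument then splits $A = \Cr_p A_p$ with each $A_p$ a profinite elementary abelian $p$-group. I would pass to the discrete Pontryagin dual $\widehat{A_p} = \mathrm{Hom}_{\mathrm{cont}}(A_p, \mathbb{F}_p)$, which is the direct limit of the duals of the finite quotients $A_p/(A_p \cap N)$. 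Each such quotient embeds into the \v{C}ernikova $A$-part of $G/N$; the latter is a direct sum of $1$-dimensional $(G/N)$-invariants, which splits into character eigenspaces, and a Vandermonde argument on distinct eigenvalues shows that every submodule decomposes in the same way. Consequently each $(A_p/(A_p \cap N))^{*}$ is a direct sum of $1$-dimensional character eigenspaces, and in the direct limit $\widehat{A_p} = \bigoplus_\chi \widehat{A_p}^{\,\chi}$ over the continuous characters $\chi \colon G \to \mathbb{F}_p^\times$. Dualizing back yields $A_p = \Cr_\chi (A_p)^\chi$, and since $G$ acts on each $(A_p)^\chi$ by the scalar $\chi$, any cartesian product decomposition of $(A_p)^\chi$ into cyclic groups of order $p$ automatically consists of $G$-invariant cyclic subgroups. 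Concatenating over $p$ and $\chi$ produces $A = \Cr_{i \in I} \, \langle a_i\rangle$ with each factor normal in $G$.

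The main obstacle is precisely this Maschke-type decomposition of $A_p$: semisimplicity is automatic at each finite level, but assembling the decompositions compatibly across the inverse system is delicate. Passing to the discrete Pontryagin dual turns the inverse limit into a direct limit of discrete $\mathbb{F}_p[G]$-modules, where the gluing becomes essentially formal.
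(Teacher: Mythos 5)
Your argument is correct, but both directions take a genuinely different route from the paper's. For (ii)$\Rightarrow$(i), the paper proves \cref{semidirect profinite-C} and then runs a Zorn's lemma argument to give every open subgroup of $A$ a $G$-invariant permutable complement; you instead exhibit a cofinal family of open normal subgroups $N=N_AN_B$ with $G/N$ a finite \v{C}ernikova group and conclude via Theorem A(vi). This works and is arguably more direct, at the cost of some bookkeeping you should not skip: that $N_AN_B$ is normal in $G$ (this uses both that $B$ is abelian and that $N_B$ centralizes $A/N_A$), and that $BN\cap AN=N$ so that $G/N$ really is the semidirect product of the two images. For (i)$\Rightarrow$(ii), the heart of the matter is \cref{structure of closed abelian normal}: the paper shows the radical $\Phi(A)$ of the profinite $G$-module $A=\overline{G'}$ is trivial (using the complement property directly, via \cref{abelian normal sbgp}) and then invokes Wilson's Proposition 7.4.5, which performs the gluing across the inverse system in one stroke; you instead pass to the discrete Pontryagin dual and glue the finite-level eigenspace decompositions there. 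That is a valid, essentially self-contained alternative, though it leans on \v{C}ernikova's finite theorem at every level where the paper only ever uses the complement property, and it yields the same conclusion (scalar action on each $\chi$-component, hence arbitrary line decompositions are $G$-invariant). One small caveat: the ``Vandermonde argument'' does not literally work with the powers of a single group element, since $\mathbb{F}_p^{\times}$ may be too small for one element to separate all the characters occurring (e.g.\ several characters into $\{\pm 1\}$); the correct statement --- a submodule of a direct sum of pairwise distinct one-dimensional character eigenspaces is the direct sum of its intersections with them --- follows from linear independence of characters, i.e.\ from the idempotents in the image of the group algebra. This is standard and does not affect the proof.
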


All $C$-groups are torsion groups, but as Theorem B implies, this need not be the case for a profinite-$C$ group: consider for example the cartesian product $\Cr_{p\in\P} \, C_p$, where $\P$ is the set of all primes.
This raises the question of whether a profinite $C$-group is a $C$-group if and only if it is torsion.
As we next see, being torsion is not usually enough and we need an extra condition.

\begin{theoremC}
Let $G$ be a profinite-$C$ group.
Then the following are equivalent:
\begin{enumerate}
\item
$G$ is a $C$-group.
\item
$G$ is torsion and $|G:Z(G)\overline{G'}|<\infty$.
\end{enumerate}
Thus an abelian profinite-$C$ group is a $C$-group if and only if it is torsion.
\end{theoremC}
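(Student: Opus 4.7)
My plan is to work throughout with the Theorem B decomposition $G=B\ltimes A$ in which $A=\overline{G'}=\Cr_i\langle a_i\rangle$ and $B=\Cr_j\langle b_j\rangle$. Since $A$ is abelian and $G=BA$ with $B\cap A=1$, a direct computation using the unique decomposition $z=b_z a_z$ of a central element should yield $Z(G)=C_B(A)\times(A\cap Z(G))$ and hence $Z(G)\,\overline{G'}=C_B(A)\cdot A$, giving the clean identity
\[
|G:Z(G)\overline{G'}|=|B:C_B(A)|.
\]
Moreover, since $|\operatorname{Aut}(\langle a_i\rangle)|=|a_i|-1$ is coprime to any prime distinct from $|a_i|$, the factor $B_q:=\Cr_{|b_j|=q}\langle b_j\rangle$ acts trivially on the $q$-primary part $A_q$ of $A$, so the finiteness question separates into the indices $|B_q:C_{B_q}(A_p)|$ for distinct primes $p,q$.

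For (ii)$\Rightarrow$(i), torsion lets me write $A=\prod_pA_p$ and $B=\prod_qB_q$ as finite products of profinite elementary abelian groups, and the hypothesis makes $C_B(A)$ a closed subspace of finite codimension in $B$. Choosing a finite complement $V$ with $B=C_B(A)\oplus V$ and noting that $C_B(A)\subseteq Z(G)$, I obtain the internal direct product $G=C_B(A)\times(V\ltimes A)$. The abelian factor $C_B(A)$ is a $C$-group via $\mathbb{F}_q$-vector-space bases of each prime component. For $V\ltimes A$, finiteness of $V$ forces only finitely many characters $V\to\mathbb{F}_p^*$ to occur in the action on each $A_p$, so $A_p$ decomposes as a finite direct sum of isotypic components; $V$ acts as a scalar on each, so any $\mathbb{F}_p$-basis gives a direct-sum decomposition into $1$-dimensional $V$-invariant, hence $G$-normal, subgroups. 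Combined with a decomposition of $V$ itself into prime-order cyclics, this puts $V\ltimes A$ in \v{C}ernikova form; since a direct product of two $C$-groups is again a $C$-group, (i) follows.

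For (i)$\Rightarrow$(ii), torsion is immediate from \v{C}ernikova's theorem. Assuming $|B_q:C_{B_q}(A_p)|=\infty$ for some $p\neq q$, I pass to the closed $C$-subgroup $H=B_q\ltimes A_p$. Each $\langle a_i\rangle\subseteq A_p$ provides a character $\chi_i:B_q\to\mathbb{F}_p^*$ with $C_{B_q}(A_p)=\bigcap_i\ker\chi_i$; the infinite-index hypothesis forces infinitely many $\mathbb{F}_q$-linearly independent characters $\psi_1,\psi_2,\ldots$ among the $\chi_i$. The quotient $\bar B:=B_q/\bigcap_k\ker\psi_k$ is an uncountable profinite $\mathbb{F}_q$-space embedded continuously into $\prod_k\mathbb{F}_p^*$, and a Baire-category argument in this compact space rules out that the image lies inside the countable direct sum. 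This produces $b\in B_q$ with $\psi_k(b)\neq 1$ for infinitely many $k$, and choosing $a\in A_p$ supported on indices $i_k$ with $\chi_{i_k}=\psi_k$ yields $[b,a]\in[B_q,A_p]$ whose support meets infinitely many distinct $\chi_i$-classes. To reach the contradiction I then analyze any \v{C}ernikova decomposition $H=B''\ltimes A''$: each prime-order normal cyclic of $H$ must lie in $A_p$ (as a $B_q$-invariant line) or in $C_{B_q}(A_p)\subseteq B_q$, so writing $A''=A''_1\oplus A''_2$ accordingly, $A''_1\supseteq[B_q,A_p]$ and $A''_1$ is an abstract direct sum of $1$-dimensional $B_q$-invariant subspaces of $A_p$, hence contained in the direct sum of the $\psi$-isotypic subspaces $V_\psi\subseteq A_p$. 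But $[b,a]\in A''_1$ has infinite character-support and therefore lies outside this direct sum, a contradiction.

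The hard part will be the pairing in (i)$\Rightarrow$(ii) of (a) the profinite Baire-category argument producing the ``character-rich'' element of $B_q$ from the uncountable profinite quotient $\bar B$, with (b) the module-theoretic observation that a direct sum of $1$-dimensional $H$-invariant subspaces of a cartesian-product module $\Cr_iV_i$ necessarily sits inside the direct sum of isotypic components, and hence cannot contain any element whose character-support is infinite. Once these two ingredients are in place, the identification of the $A_p$-portion of $A''$ as containing $[B_q,A_p]$ yields the contradiction cleanly.
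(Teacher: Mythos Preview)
Your setup and the implication (ii)$\Rightarrow$(i) match the paper's argument essentially line for line: the paper also writes $G=K\ltimes\overline{G'}$, derives $|G:Z(G)\overline{G'}|=|K:Z(G)\cap K|=|K:C_K(\overline{G'})|$, splits off a finite complement of $C_K(\overline{G'})$ in $K$, and then shows (this is its \cref{torsion G' open}) that the resulting finite-by-$\overline{G'}$ factor is a $C$-group by decomposing each Sylow of $\overline{G'}$ into finitely many isotypic components on which the finite complement acts by scalars.

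For (i)$\Rightarrow$(ii) your route is genuinely different. After reducing to $H=B_q\ltimes A_p$, the paper passes to a quotient keeping exactly one cartesian factor $\langle x_{i(C)}\rangle$ per character class $C$, shows that in this quotient the \emph{only} cyclic normal subgroups of $R=\Cr_{C}\langle x_{i(C)}\rangle$ are the chosen factors themselves, and finishes with a pure cardinality argument: if the set $\CC$ of character classes were infinite, then $R$, being forced to equal a \emph{direct} product of a subfamily of $\{\langle x_{i(C)}\rangle\}$, would satisfy $|R|\le|\CC|<2^{|\CC|}=|R|$. Your argument instead manufactures a concrete witness $[b,a]\in H'$ with infinite character-support and shows that the $p$-part of the normal kernel $A''$ in \emph{any} \v{C}ernikova decomposition of $H$ must lie inside the restricted sum $\Dr_\psi V_\psi$ of isotypic pieces, which cannot contain such an element. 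Both arguments are correct; the paper's cardinality trick is shorter and sidesteps the Baire step, while yours produces an explicit obstruction. Note, incidentally, that the Baire detour is not really needed: linear independence of the $\psi_k$ already makes the continuous map $\bar B\to\prod_k C_q$ have dense, hence (by compactness) full, image, so an element $b$ with $\psi_k(b)\ne 1$ for all $k$ exists outright.

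One small imprecision worth fixing: a normal cyclic subgroup of order $q$ in $H=B_q\ltimes A_p$ need not lie in $B_q$; it only lies in $C_H(A_p)=C_{B_q}(A_p)\cdot A_p$. This does not damage your argument, since you only use that the $p$-primary part $A''_1$ of $A''$ lies in $A_p$, contains $H'=[B_q,A_p]$, and is an abstract direct sum of one-dimensional $B_q$-invariant subspaces of $A_p$ --- all of which hold regardless of where the $q$-part of $A''$ sits.
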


\noindent
\textit{Notation.\/}
If $G$ is a profinite group, we write $H\le_c G$ or $H\le_{o} G$ to mean that $H$ is a closed or open subgroup of $G$,
respectively.
Also, we let $\pi(G)$ be the set of prime divisors of the order of $G$, which is a supernatural number.
If $\{G_i\}_{i\in I}$ is a family of groups, we denote by $\Cr_{i\in I} \, G_i$ the full cartesian product of the groups
$G_i$, and by $\Dr_{i\in I} \, G_i$ the direct product of the $G_i$, i.e. the set of tuples of finite support.

\section{Proof of the results}

As mentioned in the introduction, we define a profinite-$C$ group as a profinite group in which all closed
subgroups have a closed permutable complement.
To start with, we collect some elementary properties of profinite-$C$ groups in the next three lemmas.
We omit the proofs, since they are either straightforward or follow the proofs of similar results for $C$-groups,
as one can see in Section 3.2 of the book \cite{Sch}.
Given a group $G$ and $H\le G$, we say that $K\le G$ is a \emph{supplement} of $H$ in $G$ if $G=HK$.

\begin{lemma}
\label{subgroups and quotients of profinite-C}
Let $G$ be a profinite group and let $J\le_{c} G$ and $N\trianglelefteq_{c} G$.
Then:
\begin{enumerate}
\item
If $H\le_{c} J$ and $H$ has a closed permutable complement $K$ in $G$, then $J\cap K$ is a closed permutable
complement of $H$ in $J$.
In particular, if $G$ is a profinite-$C$ group, then $J$ is also profinite-$C$.
\item
If $N\le H\le_{c} G$ and $H$ has a closed permutable complement $K$ in $G$, then $KN/N$ is a closed permutable
complement of $H/N$ in $G/N$.
In particular, if $G$ is a profinite-$C$ group then also $G/N$ is profinite-$C$.
\end{enumerate}
\end{lemma}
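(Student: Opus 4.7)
The plan is to reduce both parts of the lemma to applications of Dedekind's modular law, together with a single compactness observation needed in part (ii) to guarantee that a product of closed subgroups is again closed in a profinite group.

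For part (i), I would first observe that $J\cap K$ is closed as the intersection of two closed subgroups, and that $H\cap(J\cap K)\subseteq H\cap K=1$. To establish the product identity, write $J=J\cap G=J\cap HK$ and apply Dedekind's modular law, which is available because $H\le J$, to conclude $J\cap HK=H(J\cap K)$ and hence $J=H(J\cap K)$. The ``in particular'' statement is then immediate: any closed subgroup of $J$ is also closed in $G$, so the profinite-$C$ hypothesis on $G$ directly supplies closed permutable complements inside $J$ via intersection with $J$.

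For part (ii), the crucial preliminary point is that $KN$ is closed in $G$. Since $K$ and $N$ are closed subgroups of the compact group $G$, they are themselves compact, and hence $KN$ is the continuous image of the compact space $K\times N$ under multiplication, which is again compact and therefore closed. Thus $KN/N$ is closed in $G/N$. The identity $(H/N)(KN/N)=HKN/N=G/N$ is immediate from $G=HK$. For trivial intersection, I would write $(H/N)\cap(KN/N)=(H\cap KN)/N$ and again apply Dedekind's modular law, valid because $N\le H$, to obtain $H\cap KN=N(H\cap K)=N$, so the quotient is trivial. The ``in particular'' claim then follows by lifting any closed subgroup of $G/N$ to a closed subgroup $H$ of $G$ containing $N$, applying the profinite-$C$ property of $G$ to obtain a closed permutable complement $K$, and projecting it to $KN/N$.

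I do not anticipate any real obstacle. The only profinite-specific input is the compactness argument that makes $KN$ closed; the remainder of the argument is purely algebraic and parallels the classical treatment of $C$-groups in Section 3.2 of Schmidt's book, which is exactly why the authors omit the proof.
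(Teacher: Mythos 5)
Your proof is correct and is exactly the standard argument the authors have in mind: the paper explicitly omits the proof of this lemma, referring to the analogous results for $C$-groups in Section 3.2 of Schmidt's book, and your use of Dedekind's modular law for both the product and intersection identities, together with the compactness observation that $KN$ is closed, is precisely the intended adaptation to the profinite setting. No gaps.
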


\begin{lemma}
\label{supplements and complements}
Let $G$ be a profinite-$C$ group and let $H\le_{c} G$ and $N\trianglelefteq_{c} G$.
Then:
\begin{enumerate}
\item 
If $S$ is a closed supplement of $H$ in $G$, then $S$ contains a closed permutable complement
of $H$ in $G$.
\item
If $S/N$ is a closed permutable complement of $HN/N$ in $G/N$ then there exists
a closed permutable complement $K$ of $H$ in $G$ such that $S=KN$.
\end{enumerate}
\end{lemma}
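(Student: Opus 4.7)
The plan is to reduce both parts to the complementation property inside a smaller profinite-$C$ subgroup (provided by \cref{subgroups and quotients of profinite-C}(i)) combined with a Dedekind-type identity. For part (i), I would consider the closed subgroup $H\cap S$ of $S$. Since $G$ is profinite-$C$ and $S\le_{c} G$, \cref{subgroups and quotients of profinite-C}(i) tells us that $S$ is itself profinite-$C$, so $H\cap S$ admits a closed permutable complement $K$ inside $S$. Then I would check that this same $K$ complements $H$ in $G$: the supplement relation follows from
\[
G=HS=H(H\cap S)K=HK,
\]
and the intersection vanishes because $H\cap K\subseteq H\cap S\cap K=1$, using $K\le S$.

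For part (ii), note first that $N\le S$ since $S/N$ is a subgroup of $G/N$, and the hypothesis unpacks to $G=HNS=HS$ together with $HN\cap S=N$. Thus $S$ is a closed supplement of $H$ in $G$, and applying part (i) produces a closed permutable complement $K$ of $H$ in $G$ with $K\le S$. The inclusion $KN\le S$ is then immediate. The substantive step is the reverse inclusion $S\le KN$: given $s\in S$, write $s=hk$ with $h\in H$ and $k\in K\le S$; then $h=sk^{-1}\in H\cap S\subseteq HN\cap S=N$, so $s=hk\in NK=KN$.

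The main obstacle I foresee is not computational but topological: ensuring that all the subgroups constructed along the way remain closed. This is automatic in our setting, since $K$ is closed in $G$ as it is closed in the compact subgroup $S$, and $KN$ is closed because in a compact group the product of a closed subgroup by a closed normal subgroup is the continuous image of the compact space $K\times N$ under multiplication, hence compact. With this housekeeping in place, both statements follow from the two short identities above.
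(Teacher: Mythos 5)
Your proof is correct, and it is precisely the standard argument the paper alludes to when it omits these proofs (the profinite adaptation of the $C$-group case in Section 3.2 of Schmidt's book): part (i) via \cref{subgroups and quotients of profinite-C}(i) applied to $S$ plus the Dedekind-type identities, and part (ii) by reducing to part (i) and verifying $S=KN$. The topological housekeeping you note (closed in a closed subgroup is closed; products of compact subgroups are compact) is exactly what is needed.
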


\begin{lemma}
\label{direct product of profinite-C}
Let $G_1$ and $G_2$ be two profinite-$C$ groups.
Then $G_1\times G_2$ is also profinite-$C$.
\end{lemma}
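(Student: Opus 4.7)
The plan is to reduce the statement to a fact about finite $C$-groups, by exploiting the equivalent formulations of the profinite-$C$ property given by Theorem A. The simplest route is through conditions (v) or (vi): a profinite group is profinite-$C$ precisely when it is an inverse limit of finite $C$-groups, or equivalently when all its quotients by members of some fundamental system of open normal neighborhoods of the identity are finite $C$-groups.

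First I would pick fundamental systems $\NN_1$ and $\NN_2$ of neighborhoods of the identity in $G_1$ and $G_2$ consisting of open normal subgroups, and observe that
\[
\NN=\{\, N_1\times N_2 : N_1\in\NN_1,\ N_2\in\NN_2\,\}
\]
is such a fundamental system for $G_1\times G_2$. Since $G_1$ and $G_2$ are profinite-$C$, the implication (i)$\Rightarrow$(v) of Theorem A gives that $G_1/N_1$ and $G_2/N_2$ are finite $C$-groups for all $N_1\in\NN_1$ and $N_2\in\NN_2$. Using the natural isomorphism
\[
(G_1\times G_2)/(N_1\times N_2)\;\cong\; (G_1/N_1)\times (G_2/N_2),
\]
the whole problem is reduced to the finite assertion that the direct product of two finite $C$-groups is a finite $C$-group. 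Then applying the implication (v)$\Rightarrow$(i) of Theorem A in the converse direction finishes the argument.

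The only non-cosmetic step is therefore the reduction target: showing that the class of finite $C$-groups is closed under direct products. This is classical and can be seen immediately from Philip Hall's characterization \cite{H}: a finite group is a $C$-group if and only if it embeds into a direct product of two groups of squarefree order. If $G_1\hookrightarrow H_1\times K_1$ and $G_2\hookrightarrow H_2\times K_2$ with $H_i,K_i$ of squarefree order, then $G_1\times G_2$ embeds in $(H_1\times H_2)\times(K_1\times K_2)$, whose two factors again have squarefree order. (Alternatively, one can invoke \v{C}ernikova's structure theorem and repackage the two semidirect decompositions as one.) I do not foresee a real obstacle here; the main care needed is just to verify that the products of fundamental systems form a fundamental system in $G_1\times G_2$, which is standard in the profinite topology.
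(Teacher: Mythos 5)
Your reduction to the finite case via Theorem A is logically circular in the context of this paper. The present lemma is itself an ingredient in the proof of Theorem A: the implication (vi)$\Rightarrow$(i) there is deduced from \cref{cartesian product of profinite-C}, whose transfinite-induction argument invokes \cref{direct product of profinite-C} at every successor step. Your proof uses precisely the direction (v)$\Rightarrow$(i) (equivalently (vi)$\Rightarrow$(i)) of Theorem A --- the passage from ``all quotients along a fundamental system are finite $C$-groups'' back to the profinite-$C$ property --- so you are assuming the hardest part of what this lemma is needed to establish. The portions of your argument that do not depend on this (that $\{N_1\times N_2\}$ is a fundamental system of neighborhoods of the identity, that $(G_1\times G_2)/(N_1\times N_2)\cong (G_1/N_1)\times(G_2/N_2)$, and that finite $C$-groups are closed under direct products, via Hall's characterization) are all correct, but they only carry out the easy reduction; the genuinely needed content, namely a compactness or inverse-limit argument that assembles a single closed complement from the complements in the various finite quotients, is exactly what \cref{cartesian product of profinite-C} supplies and cannot be quoted here.

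What is wanted instead is a direct argument from the definition, following the classical proof for $C$-groups in Section 3.2 of \cite{Sch}, which is why the authors omit it. Concretely: given $H\le_c G=G_1\times G_2$, the quotient $G/G_1\cong G_2$ is profinite-$C$, so $HG_1/G_1$ has a closed permutable complement $S/G_1$; thus $G=HS$ and $HG_1\cap S=G_1$, and Dedekind's law gives $H\cap S\le G_1$ as well as $S=G_1\times (S\cap G_2)$. Since $G_1$ is profinite-$C$, choose a closed permutable complement $L$ of $H\cap S$ in $G_1$ and set $K=L\times(S\cap G_2)$, a closed subgroup of $S$. Then $S=(H\cap S)K$ with $(H\cap S)\cap K=1$, whence $G=HS=HK$ and $H\cap K=(H\cap S)\cap K=1$, so $K$ is a closed permutable complement of $H$ in $G$. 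I recommend replacing the appeal to Theorem A by an argument of this kind.
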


Obviously, Lemma \ref{direct product of profinite-C} can be extended to the cartesian product of a finite number of
profinite-$C$ groups.
The proof of Theorem A relies on generalising it to arbitrary cartesian products.

\begin{theorem}
\label{cartesian product of profinite-C}
Let $\{G_i\}_{i\in I}$ be a family of profinite-C groups.
Then the cartesian product $\Cr_{i\in I} \, G_i$ is also a profinite-$C$ group.
As a consequence, an inverse limit of profinite-$C$ groups is profinite-$C$.
\end{theorem}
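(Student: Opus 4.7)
The plan has two parts. The inverse-limit statement follows from the cartesian-product one: any inverse limit of profinite groups is canonically a closed subgroup of the full cartesian product, so \cref{subgroups and quotients of profinite-C}(i) transports the profinite-$C$ property. Thus I focus on proving that $G=\Cr_{i\in I} G_i$ is profinite-$C$.

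Fix $H\le_c G$ and well-order the index set as $I=\{i_\alpha : \alpha<\lambda\}$. For each ordinal $\alpha\le\lambda$ let $N_\alpha=\ker\bigl(G\to\Cr_{\beta<\alpha} G_{i_\beta}\bigr)$, a closed normal subgroup of $G$ with $N_0=G$ and $N_\lambda=1$. Since each $G_{i_\alpha}$ is profinite-$C$, I may choose a closed permutable complement $K_\alpha^*$ of the closed subgroup $\pi_{i_\alpha}(H\cap N_\alpha)$ in $G_{i_\alpha}$, and I propose
$$K \;=\; \Cr_{\alpha<\lambda} K_\alpha^*$$
as a closed permutable complement of $H$ in $G$. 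The triviality $H\cap K=1$ is immediate: if $1\ne x\in H\cap K$ and $\alpha$ is the least ordinal with $\pi_{i_\alpha}(x)\ne 1$, then $x\in N_\alpha$ forces $\pi_{i_\alpha}(x)\in\pi_{i_\alpha}(H\cap N_\alpha)\cap K_\alpha^*=1$, a contradiction.

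The substantive step is $HK=G$. Fix $g\in G$ and build by transfinite recursion elements $h^{(\alpha)}\in H$ satisfying the invariant that $\pi_{i_\beta}\bigl((h^{(\alpha)})^{-1}g\bigr)\in K_\beta^*$ for every $\beta<\alpha$. At a successor stage, factor $\pi_{i_\alpha}((h^{(\alpha)})^{-1}g)=\pi_{i_\alpha}(h')\,k^*$ with $h'\in H\cap N_\alpha$ and $k^*\in K_\alpha^*$, and set $h^{(\alpha+1)}=h^{(\alpha)}h'$; the previously secured coordinates survive because $h'\in N_\alpha$ has trivial projection on $G_{i_\beta}$ for $\beta<\alpha$. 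At a limit stage $\alpha$, the cosets $h^{(\gamma+1)}N_\gamma$ with $\gamma<\alpha$ form a nested family of closed subsets of the compact group $H$, so their intersection is non-empty; any choice of $h^{(\alpha)}$ in it preserves the invariant, since $N_{\beta+1}$ annihilates the $i_\beta$-coordinate and hence $\pi_{i_\beta}(h^{(\alpha)})=\pi_{i_\beta}(h^{(\beta+2)})$. Setting $h=h^{(\lambda)}$ then yields $h^{-1}g\in K$, so $g\in HK$.

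The main obstacle is precisely this limit stage: the invariant involves infinitely many coordinates simultaneously, and its passage through a limit ordinal requires a compactness/finite-intersection argument inside $H$, together with a careful verification that the cosets $h^{(\gamma+1)}N_\gamma$ nest correctly along the well-ordering (which in turn depends on the fact that every $h'$ introduced at stage $\gamma$ lies in $N_\gamma$).
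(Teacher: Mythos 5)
Your argument is correct, and while it shares the paper's overall skeleton (well-order $I$, run a transfinite recursion, and invoke compactness at limit ordinals), the implementation is genuinely different. The paper never exhibits the complement as an explicit cartesian product: it proves by transfinite induction that each quotient $G/I_\mu$ (where $I_\mu$ is the intersection of the first $\mu$ coordinate kernels) is profinite-$C$, building a \emph{descending chain} of complements $K_\mu$ via the lifting statement in \cref{supplements and complements}(ii) at successor steps and taking $K_\mu=\bigcap_{\lambda<\mu}K_\lambda$ at limits, where the identity $\bigcap_{\lambda<\mu}HK_\lambda=H\bigl(\bigcap_{\lambda<\mu}K_\lambda\bigr)$ for descending chains of closed subsets (Ribes--Zalesskii, Prop.\ 2.1.4(a)) does the compactness work. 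You instead fix the candidate $K=\Cr_\alpha K_\alpha^*$ once and for all, with $K_\alpha^*$ a complement of $\pi_{i_\alpha}(H\cap N_\alpha)$ in $G_{i_\alpha}$, and verify $G=HK$ element by element; your nested-coset argument in the compact group $H$ is the same compactness input in disguise. Your route has the virtue of displaying the complement concretely and of not needing \cref{direct product of profinite-C} or the lifting lemma; the paper's route yields the slightly stronger intermediate statement that all the quotients $G/I_\mu$ are profinite-$C$ with compatible complements. Two small points of hygiene: $\pi_{i_\alpha}(H\cap N_\alpha)$ is closed in $G_{i_\alpha}$ because it is the continuous image of a compact set (worth saying, since otherwise the complement $K_\alpha^*$ need not exist), and the sets you intersect at a limit stage should be written as $h^{(\gamma+1)}N_\gamma\cap H=h^{(\gamma+1)}(N_\gamma\cap H)$, which are the nonempty nested closed subsets of $H$ to which the finite intersection property actually applies ($h^{(\gamma+1)}N_\gamma$ itself is not contained in $H$). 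Neither affects the validity of the proof.
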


\begin{proof}
Set $G=\Cr_{i \in I} \, G_i$.
We may assume that $I$ is well-ordered and so also that $I$ is an ordinal number $\alpha$.
Thus $I=\{ \mu \mid \text{$\mu$ is an ordinal and $\mu<\alpha$} \}$.
For every $\mu\in I$, consider the cartesian product
$N_\mu=\Cr_{i \in I} \, H_i$, where $H_i$ is trivial if $i=\mu$ and $H_i=G_i$ otherwise.
Then $N_\mu$ is a closed normal subgroup of $G$ and $G/N_{\mu}\cong G_{\mu}$ is a profinite-$C$ group.
Let us define
\[
I_{\mu} = \bigcap_{\lambda<\mu} \, N_{\lambda}
\]
for $1\le \mu \le \alpha$.
We claim that $G/I_{\mu}$ is a profinite-$C$ group, which proves the theorem, since $I_{\alpha}=1$.
More precisely, we are going to show that, for every closed subgroup $H$ of $G$, its image $HI_{\mu}/I_{\mu}$
has a permutable complement $K_{\mu}/I_{\mu}$ in $G/I_{\mu}$, and that these complements can be chosen to form a
descending chain: $K_{\mu}\le K_{\lambda}$ for all $\mu\ge \lambda$.

We use transfinite induction.
If $\mu=1$ then $I_1=N_0$ and the result holds, so we assume $\mu>1$.
Suppose first that $\mu$ is a successor ordinal.
Then $I_{\mu}=I_{\mu-1}\cap N_{\mu-1}$ and $G/I_{\mu}$ embeds as a closed subgroup in $G/I_{\mu-1}\times G/N_{\mu-1}$.
From the induction hypothesis, together with  \cref{direct product of profinite-C} and (i) of
\cref{subgroups and quotients of profinite-C}, $G/I_{\mu}$ is a profinite-$C$ group.
Now if $H$ is closed in $G$, again by induction there exists a  permutable complement $K_{\mu-1}/I_{\mu-1}$ of
$HI_{\mu-1}/I_{\mu-1}$ in $G/I_{\mu-1}$, with $K_{\mu-1}$ closed in $G$.
By (ii) of \cref{supplements and complements}, there exists a permutable complement $K_{\mu}/I_{\mu}$
of $HI_{\mu}/I_{\mu}$ in $G/I_{\mu}$ that is contained in $K_{\mu-1}/I_{\mu}$.
Thus $K_{\mu}\le K_{\mu-1}$ and the induction is complete in this case.

Assume now that $\mu$ is a limit ordinal.
Let $H$ be a closed subgroup of $G$.
By induction, there exists a descending chain $\{K_{\lambda}\}_{\lambda<\mu}$ of closed subgroups of $G$ such
that $G=HK_{\lambda}$ and $H\cap K_{\lambda}\le I_{\lambda}$ for every $\lambda<\mu$.
We define
\[
K_{\mu} = \bigcap_{\lambda<\mu} \, K_{\lambda}.
\]
Then
\[
H\cap K_{\mu} = \bigcap_{\lambda<\mu} \, (H\cap K_{\lambda}) \le \bigcap_{\lambda<\mu} I_{\lambda} = I_{\mu},
\]
since $\mu=\cup_{\lambda<\mu} \, \lambda$.
On the other hand, Proposition 2.1.4 (a) of \cite{RZ} yields that
\[
G = \bigcap_{\lambda<\mu} \, HK_{\lambda} = H \big( \bigcap_{\lambda<\mu} \, K_{\lambda} \big) =HK_{\mu}. 
\]
This completes the induction.

The final assertion follows immediately from (i) of \cref{subgroups and quotients of profinite-C}, since an inverse limit of groups
is a closed subgroup of the cartesian product of the groups in the inverse limit.
\end{proof}

\begin{corollary}
\label{completion of C-group is profinite-C}
If $G$ is a $C$-group, then the profinite completion of $G$ is a profinite-$C$ group.
\end{corollary}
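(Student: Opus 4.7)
The plan is to reduce this to the inverse-limit consequence of \cref{cartesian product of profinite-C} combined with the basic definition of the profinite completion. By definition, $\widehat{G}$ is the inverse limit of the system $\{G/N\}$ where $N$ runs over the normal subgroups of $G$ of finite index. Each such quotient $G/N$ is therefore a finite group, and since $G$ is a $C$-group, $G/N$ is a quotient of a $C$-group. The class of $C$-groups is closed under quotients (this is a standard fact; see Section 3.2 of \cite{Sch}), so each $G/N$ is a finite $C$-group.

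Now I would observe that a finite $C$-group is trivially a profinite-$C$ group: as a profinite group it carries the discrete topology, so every subgroup is both closed and open, and the profinite-$C$ condition coincides verbatim with the $C$-group condition. Hence $\widehat{G}$ is presented as an inverse limit of profinite-$C$ groups, and the final assertion of \cref{cartesian product of profinite-C} lets me conclude that $\widehat{G}$ is itself a profinite-$C$ group.

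The only step that is not pure unwinding of definitions is the quotient-closedness of the class of $C$-groups. If one prefers an in-text justification over citing \cite{Sch}, it can be read off from \v{C}ernikova's structure theorem stated in the introduction: writing $G = B \ltimes A$ with $A = \Dr_{i\in I} \langle a_i\rangle$ and $B = \Dr_{j\in J} \langle b_j\rangle$ as in the theorem, for any $N \trianglelefteq G$ one has $G/N = (BN/N)(AN/N)$, each $\langle a_i\rangle N/N$ is either trivial or of prime order and normal in $G/N$, and a similar analysis handles the $\langle b_j\rangle N/N$. I do not expect any real obstacle here; the corollary is essentially immediate from \cref{cartesian product of profinite-C} once the quotient-closedness is in hand.
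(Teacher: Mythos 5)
Your argument is exactly the intended one: the paper states this corollary without proof precisely because $\widehat{G}$ is by definition an inverse limit of the finite quotients $G/N$, each of which is a finite $C$-group (quotients of $C$-groups are $C$-groups, as in Section 3.2 of \cite{Sch}), so the final assertion of \cref{cartesian product of profinite-C} applies. Your proposal is correct and matches the paper's approach.
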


Now we can easily prove Theorem A.

\begin{proof}[Proof of Theorem A]
The implications (i)$\Rightarrow$(ii)$\Rightarrow$(iii) and (v)$\Rightarrow$(vi) are trivial.
Since open subgroups have finite index in a profinite group, a permutable complement of an open subgroup must
be finite, and in particular closed.
This proves that (iii) implies (iv).
Note that (iv)$\Rightarrow$(v) follows from (ii) of \cref{subgroups and quotients of profinite-C}.
Finally, the implication (vi)$\Rightarrow$(i) is a special case of \cref{cartesian product of profinite-C}.
\end{proof}

Since every $C$-group is metabelian, being the semidirect product of two abelian groups, we immediately
obtain the following corollary.

\begin{corollary}
\label{profinite-C is metabelian}
Let $G$ be a profinite-$C$ group.
Then $G$ is metabelian.
\end{corollary}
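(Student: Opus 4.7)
The plan is to reduce the statement to the known fact that every $C$-group is metabelian, via the characterization given in Theorem A that a profinite-$C$ group has every continuous finite quotient a $C$-group. Concretely, recall from the introduction that by \v{C}ernikova's theorem a $C$-group decomposes as $B \ltimes A$ with both $A$ and $B$ abelian, so its second derived subgroup is trivial.

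First, I would invoke condition (v) of Theorem A: there exists a fundamental system $\mathcal{N}$ of neighborhoods of the identity consisting of open normal subgroups such that $G/N$ is a $C$-group for every $N \in \mathcal{N}$. By the preceding observation, $(G/N)'' = 1$ for every $N \in \mathcal{N}$, which is equivalent to $G'' \le N$. Intersecting over all $N \in \mathcal{N}$ gives
\[
G'' \,\le\, \bigcap_{N \in \mathcal{N}} N \,=\, 1,
\]
since $\mathcal{N}$ is a fundamental system of neighborhoods of the identity and $G$ is Hausdorff. Hence $G'$ is abelian, so $G$ is metabelian.

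If one prefers the topological strengthening that $\overline{G'}$ is abelian, it is enough to observe that the commuting relation $\{(x,y) : [x,y]=1\}$ is closed in any Hausdorff topological group, so the closure of an abelian subgroup of a profinite group is again abelian; applying this to $G'$ gives $\overline{G'}$ abelian.

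There is really no obstacle to overcome: the corollary follows by a direct transfinite-free application of Theorem A (v) together with \v{C}ernikova's structure theorem recalled in the introduction. The only point requiring minimal care is to argue at the level of the algebraic derived subgroup when intersecting over $\mathcal{N}$, which is unproblematic since the vanishing of $G''$ forces $\overline{G'}$ to be abelian as well.
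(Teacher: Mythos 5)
Your proof is correct and follows essentially the same route as the paper: the paper deduces the corollary "immediately" from the fact that every $C$-group is metabelian together with Theorem A, which is precisely your argument that $G''\le N$ for every $N$ in a fundamental system of open normal subgroups with $G/N$ a $C$-group, whence $G''=1$. Your closing remark that $\overline{G'}$ is then abelian is also accurate and is indeed the form in which the paper later uses the corollary.
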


Let us now proceed towards the proof of Theorem B.
A key step will be the determination of the structure of abelian closed normal subgroups of profinite-$C$ groups,
which we describe in \cref{structure of closed abelian normal} below.
We first need the following result.

\begin{lemma}
\label{abelian normal sbgp}
Let $G$ be a profinite-$C$ group and let $A$ be an abelian closed normal subgroup of $G$.
Then the following hold:
\begin{enumerate}
\item 
For every $B\le_{c} A$, we have $A=B\times L$ for some $L\trianglelefteq_{c} G$.
\item
If $A$ is a minimal closed normal subgroup of $G$ then $A$ is of prime order.
\item 
If $A\ne 1$ and $B$ is maximal among the open subgroups of $A$ that are normal in $G$, then
$A/B$ is isomorphic as a $G$-module to a minimal closed normal subgroup of $G$ contained in $A$.
In particular, $A$ contains minimal closed normal subgroups of $G$. 
\end{enumerate}
\end{lemma}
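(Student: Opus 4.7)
The plan is to prove (i) first by showing that the intersection of $A$ with any closed permutable complement of $B$ in $G$ is automatically $G$-normal, then to derive (ii) from (i) via the standard decomposition of an abelian profinite group into its pro-$p$ components, and finally to obtain (iii) by applying (i) to a maximal proper open $G$-normal subgroup of $A$ that will be produced by a Zorn argument. The main obstacle is (i): the profinite-$C$ property only guarantees a closed permutable complement of $B$ inside $G$, whose intersection with $A$ need not, at first sight, be normal in $G$. The saving point will be the abelianness of $A$ combined with the factorisation $G=BK$.

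For (i), let $K$ be a closed permutable complement of $B$ in $G$ and set $L=A\cap K$. The Dedekind modular law, using $B\le A$, gives $A=A\cap BK=B(A\cap K)=BL$, and $B\cap L\le B\cap K=1$. Since $A$ is abelian, this is a direct decomposition $A=B\times L$. For normality, $K$ normalises $A$ (because $A\trianglelefteq G$), and therefore also normalises $L=A\cap K$; while $B$ lies in the abelian group $A$ together with $L$ and so centralises $L$. Since $G=BK$, it follows that $G$ normalises $L$, which is what was wanted.

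For (ii), I would first reduce to the case where $A$ is elementary abelian pro-$p$. An abelian profinite group decomposes canonically as the cartesian product of its pro-$p$ Sylow subgroups $A_p$; each $A_p$ is topologically characteristic in $A$, hence normal in $G$, so minimality forces $A=A_p$ for a single prime $p$. Next $pA$ is $G$-characteristic and closed in $A$, and it cannot equal $A$ (any nontrivial pro-$p$ abelian group admits a nontrivial continuous quotient which is a finite abelian $p$-group, and those are not $p$-divisible), so $pA=1$ by minimality. Thus $A$ is elementary abelian pro-$p$. If $|A|>p$, pick any $1\ne a\in A$; then $B=\langle a\rangle$ is a finite, hence closed, proper nontrivial subgroup of $A$, and (i) produces a closed $G$-normal $L$ with $1\ne L\ne A$, contradicting minimality. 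Hence $|A|=p$.

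For (iii), I first check that the poset of proper open $G$-normal subgroups of $A$ has a maximal element. It is nonempty: any open normal $N\trianglelefteq_o G$ not containing $A$ (which exists because $A\ne 1=\bigcap N$) gives such a subgroup $A\cap N$. A chain of such subgroups has strictly decreasing finite indices in $A$, so it stabilises; Zorn then delivers a maximal $B$. Applying (i) yields $A=B\times L$ with $L\trianglelefteq_c G$, and the projection $A\to L$ with kernel $B$ is $G$-equivariant because $G$ preserves both factors, so $L\cong A/B$ as $G$-modules. By maximality of $B$, the finite $G$-module $A/B$ has no nontrivial proper $G$-invariant subgroup; transporting this back, every closed $G$-invariant subgroup of the finite group $L$ is trivial or all of $L$, so $L$ is a minimal closed normal subgroup of $G$ contained in $A$, which proves both assertions of (iii).
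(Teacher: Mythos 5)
Your proof is correct and follows essentially the same route as the paper's: part (i) via the Dedekind law, the abelianness of $A$ and the factorisation $G=BK$; part (ii) by using (i) to manufacture a proper non-trivial $G$-normal complement that violates minimality; and part (iii) by transporting the simple $G$-module $A/B$ onto the complement $L$ furnished by (i). The only differences are that in (ii) you insert an (unnecessary but harmless) preliminary reduction to the elementary abelian pro-$p$ case, and in (iii) you spell out the existence of a maximal proper open $G$-normal subgroup of $A$, which the paper leaves implicit.
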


\begin{proof}
(i)
Let $K$ be a closed permutable complement of $B$ in $G$ and set $L=A\cap K$.
By (i) of \cref{subgroups and quotients of profinite-C}, we have $A=B\times L$.
Observe that $L\trianglelefteq A$, since $A$ is abelian, and that $L\trianglelefteq K$, since
$A\trianglelefteq G$.
As $G=AK$, we conclude that $L\trianglelefteq G$.

(ii)
By (i), every closed subgroup $B$ of $A$ provides a closed subgroup of $A$ that is normal in $G$ and avoids $B$.
From the minimality assumption on $A$, it follows that the only closed subgroups of $A$ are the trivial subgroup and $A$ itself.
On the other hand, since $A$ is profinite, the intersection of its open normal subgroups is trivial.
It readily follows that $A$ is of prime order.

(iii)
By (i), we have $A=B\times L$ for some $L\trianglelefteq_{c} G$.
Consequently we have a $G$-module isomorphism $L\cong A/B$, and since $A/B$ is a simple $G$-module by the
choice of $B$, it follows that $L$ is a minimal normal subgroup of $G$.
\end{proof}

Note that, by \cref{profinite-C is metabelian}, a minimal closed normal subgroup of a profinite-$C$ group is
necessarily abelian, so of prime order by the previous lemma.
Compare this with the fact that, in an arbitrary profinite group $G$, a minimal closed normal subgroup $D$ of $G$ (if it exists)
is of finite order, since there must be an open normal subgroup $N$ of $G$ such that $D\cap N=1$.

\vspace{10pt}

We will use the last lemma to show that an abelian closed normal subgroup $A$ of a profinite-$C$ group $G$
is isomorphic to a cartesian product of some of its subgroups.
Since we also want to provide information about the embedding of this cartesian product inside $G$, it is more convenient
to avoid isomorphisms and speak in internal terms, describing $A$ by means of some of its subgroups (which will turn out to be normal in $G$).
For this reason, we give the following definition.

\begin{definition}
\label{dfn internal cartesian}
A profinite group $G$ is said to be the \emph{internal cartesian product} of a family $\{H_{i}\}_{i\in I}$ of its subgroups
if the following conditions hold:
\begin{enumerate}[itemsep=0.5em]
\item
$H_i \unlhd_{c} G$ for every $i \in I$.
\item
$G=\overline{\langle H_{i}\,| \, \in I\rangle}$.
\item
$\bigcap_{i\in I} \, \overline{\langle H_{j} \mid j\neq i \rangle}=1$.
\end{enumerate}
\end{definition}

With this definition, it is obvious that any external cartesian product $\Cr_{i\in I} \, H_i$ is an internal cartesian
product of subgroups isomorphic to the $H_i$.
On the other hand, assume that $G$ is the internal cartesian product of a family of
subgroups $\{H_i\}_{i\in I}$.
Then, by Lemma 2.4.2 of \cite{W}, $G$ is isomorphic to the external cartesian product $\Cr_{i\in I} \, H_i$.
Actually, the isomorphism obtained in the proof of that lemma sends every $h_i\in H_i$ to the tuple with
$h_i$ in position $i$ and $1$ in every other position.
For this reason, with a slight abuse of notation and following the usual convention for direct products,
we write $G=\Cr_{i\in I} \, H_i$ in this case, the same as for the external cartesian product.

\vspace{10pt}

After this digression, we need a final remark before proceeding to the determination of the structure of abelian
closed normal subgroups in profinite-$C$ groups.
Let $G$ be a profinite group and let $M$ be a non-zero profinite $G$-module.
Following \cite[p.\ 124]{W}, we define the \textit{radical} $\Phi(M)$ of $M$ as the intersection of
all maximal open $G$-submodules of $M$.
By Proposition 7.4.5 of \cite{W}, the condition $\Phi(M)=0$ implies that $M$ is topologically isomorphic to a
cartesian product of simple profinite $G$-modules.
Relying on this fact, we can prove the following lemma.

\begin{lemma}
\label{structure of closed abelian normal}
Let G be a profinite-$C$ group and let A be a non-trivial abelian closed normal subgroup of G.
Then A is an internal cartesian product of $G$-invariant subgroups of $A$ of prime order.
\end{lemma}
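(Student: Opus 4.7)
The plan is to regard $A$ as a profinite $G$-module (with $G$ acting by conjugation) and invoke Proposition 7.4.5 of \cite{W}, which decomposes any profinite $G$-module whose radical is trivial into a topological cartesian product of simple profinite $G$-modules. Granting that $\Phi(A)$ is trivial, the simple factors produced will correspond to minimal closed normal subgroups of $G$ contained in $A$, and by \cref{abelian normal sbgp}(ii) each such factor has prime order. It will then remain only to translate the topological $G$-module isomorphism into the internal language of \cref{dfn internal cartesian}.

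The main step, which I expect to be the primary obstacle, is showing that $\Phi(A)$ is trivial: for every $1\neq a\in A$ I must produce a maximal open $G$-submodule $M$ of $A$ avoiding $a$. First I would set $N=\overline{\langle a^G\rangle}$, the closed $G$-invariant subgroup of $A$ topologically generated by the $G$-orbit of $a$. By \cref{abelian normal sbgp}(i), I can split $A=N\times L$ with $L\trianglelefteq_c G$ and $L\le A$. Since $G$ acts continuously on the profinite group $N$, the open $G$-invariant subgroups of $N$ form a base of neighborhoods of $1$, so I can pick such a $U\le_c N$ with $a\notin U$. In the finite $G$-module $N/U$ the coset $aU$ is a $G$-module generator (because the $G$-orbit of $a$ topologically generates $N$, hence generates $N/U$), so every proper $G$-submodule of $N/U$ avoids $aU$. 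Finiteness then yields a maximal proper $G$-submodule $M_1/U\le N/U$, giving a closed $G$-invariant subgroup $M_1\le N$ with $a\notin M_1$ and $N/M_1$ simple. Taking $M:=M_1\times L$ produces a closed $G$-invariant subgroup of $A$ whose quotient $A/M\cong N/M_1$ is simple, so $M$ is a maximal open $G$-submodule of $A$ with $a\notin M$, as required.

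With this in hand, Proposition 7.4.5 of \cite{W} supplies a topological $G$-module isomorphism $A\cong \Cr_{k\in K} S_k$ with each $S_k$ a simple profinite $G$-module. Letting $A_k\le A$ denote the internal copy of $S_k$, each $A_k$ is a closed $G$-invariant subgroup of $A$ that is simple as a $G$-module, so it is a minimal closed normal subgroup of $G$; by \cref{abelian normal sbgp}(ii) it has prime order. The isomorphism makes the three conditions of \cref{dfn internal cartesian} immediate: the $A_k$ are closed and normal in $G$, they topologically generate $A$, and $\bigcap_{k\in K}\overline{\langle A_j\mid j\neq k\rangle}=1$. Therefore $A=\Cr_{k\in K} A_k$ as an internal cartesian product of $G$-invariant subgroups of prime order.
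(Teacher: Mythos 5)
Your proposal is correct, and it follows the same overall strategy as the paper: reduce everything to showing that the radical $\Phi(A)$ of $A$ as a profinite $G$-module vanishes, invoke Proposition 7.4.5 of \cite{W} to get a cartesian decomposition into simple $G$-modules, and apply part (ii) of \cref{abelian normal sbgp} to see that each factor has prime order. Where you diverge is in the key sub-step $\Phi(A)=1$. The paper argues by contradiction: if $\Phi(A)\neq 1$, then by part (iii) of \cref{abelian normal sbgp} it contains a minimal closed normal subgroup $B$ of $G$, the splitting $A=B\times L$ from part (i) makes $L$ a maximal open $G$-submodule (as $B$ has prime order), so $B\le\Phi(A)\le L$, contradicting $B\cap L=1$. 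You instead give a direct construction: for each $1\neq a\in A$ you split off $N=\overline{\langle a^G\rangle}$, pass to a finite quotient $N/U$ generated as a $G$-module by the image of $a$, extract a maximal proper submodule there, and pull back to a maximal open $G$-submodule of $A$ avoiding $a$. Your route is longer but avoids part (iii) of \cref{abelian normal sbgp} entirely and is constructive rather than by contradiction; the paper's route is shorter because it has already packaged the needed existence statement into that preliminary lemma. All the steps you use (the splitting from part (i), the fact that open $G$-invariant subgroups of $N$ form a neighborhood base of the identity, and the transfer of the external isomorphism to the internal product via Lemma 2.4.2 of \cite{W}) are sound.
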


\begin{proof}
We view $A$ as a profinite $G$-module by conjugation.
If we show that $\Phi(A)$ is the trivial subgroup then, by the result mentioned before the lemma,
there is a topological $G$-module isomorphism $\varphi:C\rightarrow A$, where $C=\Cr_{i\in I} \, C_i$ is a
cartesian product of simple profinite $G$-modules.
If we set $A_i=\varphi(C_i)$ for $i\in I$, then every $A_i$ is a minimal closed $G$-invariant subgroup of $A$
and $A$ is the internal cartesian product of the family $\{A_i\}_{i\in I}$.
Now by (ii) of \cref{abelian normal sbgp}, each $A_i$ is of prime order and we are done.

Thus it suffices to show that $\Phi(A)=1$.
Suppose, by way of contradiction, that $\Phi(A)\ne 1$.
We are going to make use of the results stated in \cref{abelian normal sbgp}.
On the one hand, since $\Phi(A)$ is closed and normal in $G$, it contains a minimal closed normal
subgroup of $G$, say $B$.
We can then decompose $A=B\times L$, where $L\trianglelefteq_{c} G$.
Since $B$ is of prime order, $L$ is also a maximal open subgroup of $A$.
It follows that $B\le \Phi(A)\le L$, which is a contradiction with the product of $B$ and $L$ being direct.
\end{proof}

The characterisation of abelian profinite-$C$ groups follows immediately from
\cref{cartesian product of profinite-C} and \cref{structure of closed abelian normal}.

\begin{corollary}
\label{characterisation abelian profinite-C}
Let $G$ be an abelian profinite group.
Then $G$ is a profinite-$C$ group if and only if $G$ is an internal cartesian product of subgroups
of prime order.
\end{corollary}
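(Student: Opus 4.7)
The plan is to obtain both directions directly from the two results highlighted by the authors, exploiting the fact that in an abelian group every subgroup is automatically normal.

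For the forward implication, suppose $G$ is an abelian profinite-$C$ group. Since $G$ is abelian, $G$ itself is a closed abelian normal subgroup of $G$, so I apply \cref{structure of closed abelian normal} with $A=G$. This immediately yields that $G$ is the internal cartesian product of a family $\{A_i\}_{i\in I}$ of $G$-invariant (hence, in particular, closed) subgroups of prime order. As $G$ is abelian, the $G$-invariance condition is automatic, so $G$ is the internal cartesian product of subgroups of prime order, which is what we need.

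For the converse, suppose $G=\Cr_{i\in I}\,H_i$ is an internal cartesian product of subgroups $H_i$, each of prime order. By the remarks following \cref{dfn internal cartesian}, $G$ is topologically isomorphic to the external cartesian product of the $H_i$. Each $H_i$ is finite of prime order, hence a (finite) $C$-group in a trivial way, and therefore a profinite-$C$ group. Applying \cref{cartesian product of profinite-C} to the family $\{H_i\}_{i\in I}$ then gives that $G$ is a profinite-$C$ group.

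There is no real obstacle here: both directions reduce to a single invocation of a previous result, and the only small point to notice is that in the abelian case the qualifier ``$G$-invariant'' in \cref{structure of closed abelian normal} is vacuous, so the conclusion of that lemma matches verbatim the statement to be proved.
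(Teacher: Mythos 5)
Your proof is correct and is exactly the argument the paper intends: the authors state that the corollary ``follows immediately from'' \cref{cartesian product of profinite-C} and \cref{structure of closed abelian normal}, which are precisely the two results you invoke for the converse and forward directions respectively. The only negligible point you could add is that \cref{structure of closed abelian normal} assumes $A\neq 1$, so the trivial group must be handled as the (empty) internal cartesian product, but this does not affect correctness.
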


Contrary to the case of direct products, a semidirect product of two profinite-$C$ groups need not be
profinite-$C$.
For example, consider a non-abelian finite $p$-group $P$ of order $p^3$ and exponent $p$, where $p$ is an
odd prime.
Then $P$ can be decomposed as a semidirect product of an elementary abelian $p$-group of order $p^2$ and a cyclic group of order $p$, both of which are $C$-groups.
However, it is clear that the center of $G$ does not have a permutable complement in $G$.
Our next lemma provides a sufficient condition for a semidirect product of profinite-$C$ groups to be profinite-$C$.

\begin{lemma}
\label{semidirect profinite-C}
Let $G$ be a profinite group and assume that $G=H\ltimes N$ is the semidirect product of two closed subgroups $H$ and $N$.
If $H$ is a profinite-$C$ group and every open subgroup of $N$ has a $G$-invariant permutable complement in $N$, then $G$ is also profinite-$C$.
\end{lemma}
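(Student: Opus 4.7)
The plan is to invoke Theorem A: it suffices to show that each open subgroup $U \le_o G$ admits a (closed) permutable complement. My strategy will be to absorb the ``$N$-part'' of $U$ into $U$ itself by passing to a suitable quotient, thereby reducing the problem to the profinite-$C$ structure of $H$.

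First, I will apply the hypothesis on $N$ to the open subgroup $U \cap N$, obtaining a $G$-invariant (hence normal in $G$) permutable complement $M \le N$ with $N = (U\cap N) M$ and $(U\cap N) \cap M = 1$; in particular $M$ is finite and $H \cap M \le H \cap N = 1$. Writing $\bar{\phantom{x}}$ for images modulo $M$, we have $\bar G = \bar H \ltimes \bar N$ with $\bar H \cong H$ profinite-$C$ and $\bar N$ finite. The main conceptual insight, and what makes the reduction work, is that although $M$ lies outside $U$ itself, one has $UM \supseteq (U\cap N)M = N$, so $\bar N \le \bar U$. Consequently $\bar U = \bar V \bar N$ with $\bar V := \bar U \cap \bar H$ open in $\bar H$, and Theorem A applied to $\bar H$ furnishes a closed permutable complement $\bar W$ of $\bar V$ in $\bar H$.

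The remaining step, which I anticipate as the main technical obstacle, is to verify that $\bar W$ is a permutable complement of the larger subgroup $\bar U$ in all of $\bar G$, not just of $\bar V$ in $\bar H$. The intersection is easy: $\bar U \cap \bar W \le (\bar U \cap \bar H) \cap \bar W = \bar V \cap \bar W = 1$. For the product, a typical element $\bar h \bar n \in \bar G$ will be rewritten as $\bar v \bar w \bar n = \bar v(\bar w \bar n \bar w^{-1})\bar w$, and the normality of $\bar N$ in $\bar G$ will place $\bar v(\bar w \bar n \bar w^{-1}) \in \bar V \bar N = \bar U$. Lifting, the preimage $W^*$ of $\bar W$ in $G$ is closed, contains $M$, satisfies $G = UW^*$, and has $U \cap W^* \le U \cap M = 1$. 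Hence $W^*$ will be a closed permutable complement of $U$ in $G$, and Theorem A will conclude that $G$ is profinite-$C$.
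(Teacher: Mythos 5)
Your proposal is correct and is at heart the same argument as the paper's: reduce via Theorem A to an open subgroup $U$, complement $U\cap N$ in $N$ by a $G$-invariant (hence normal, finite) subgroup $M$, complement the relevant open subgroup of $H$ using that $H$ is profinite-$C$, and combine the two complements. The only difference is presentational: the paper works directly in $G$, taking $C$ a $G$-invariant permutable complement of $U\cap N$ in $N$ and $D$ a permutable complement of $UN\cap H$ in $H$, and verifies by two applications of Dedekind's law that $CD$ is a permutable complement of $U$; you instead pass to $G/M$ and lift, and your $W^*$ is exactly a subgroup of the form $DM$ with $D\le H$, so the two constructions coincide. One slip, though a harmless one: $\bar N=N/M$ is isomorphic to $U\cap N$, which is open in $N$, so it is \emph{not} finite in general (what is finite is $M$ itself); you never use this claim, so the proof stands, but you should delete it. You should also make explicit that $U\cap W^*\le M$ because its image in $\bar G$ lies in $\bar U\cap\bar W=1$, which is the step that lets you conclude $U\cap W^*\le U\cap M=(U\cap N)\cap M=1$.
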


\begin{proof}
By Theorem A, it suffices to see that every open subgroup $E$ of $G$ has a permutable complement in $G$.
By assumption, $E\cap N$ has a $G$-invariant permutable complement $C$ in $N$.
Thus $N=(E\cap N)C$ and $E\cap C=E \cap N \cap C=1$.
On the other hand, since $H$ is profinite-$C$,
$EN\cap H$ has a permutable complement $D$ in $H$.
Hence $H=(EN \cap H)D$ and $EN\cap D=EN\cap H \cap D=1$.
Now, since $C$ is $G$-invariant, the product $CD$ is a subgroup of $G$, and
\[
ECD = E(E\cap N)CD = END = EN(EN\cap H)D = ENH = G
\]
and, since
\[
E\cap CD \le EN\cap CD = C(EN\cap D) = C,
\]
we also have
\[
E\cap CD \le E\cap C = 1.
\]
Thus $CD$ is a permutable complement of $E$ in $G$.
\end{proof}

Now we can readily prove Theorem B, which characterises profinite-$C$ groups in all generality.

\begin{proof}[Proof of Theorem B]
(i)$\Rightarrow$(ii).
By \cref{profinite-C is metabelian}, $G$ is metabelian.
Thus $\overline{G'}$ is an abelian closed normal subgroup of $G$ and, by
\cref{structure of closed abelian normal}, it is an internal cartesian product of groups of prime order,
all of which are normal in $G$.
Let $B$ be a closed permutable complement of $\overline{G'}$ in $G$.
Then $B\cong G/\overline{G'}$ is an abelian profinite-$C$ group, and consequently also a cartesian product of groups of prime order by \cref{characterisation abelian profinite-C}.

(ii)$\Rightarrow$(i).
By \cref{characterisation abelian profinite-C} and \cref{semidirect profinite-C}, it suffices to show that every open subgroup $H$ of $A$ has a
$G$-invariant permutable complement in $A$.
We consider the set
\[
\LL = \{ K\le A \mid \text{$H\cap K=1$ and $K\trianglelefteq G$} \}.
\]
Then every chain in $\LL$ with respect to inclusion has clearly an upper bound in $\LL$, and by
Zorn's Lemma, $\LL$ has a maximal element $L$.

We claim that $A=HL$, which shows that $L$ is a $G$-invariant permutable complement of $H$ in $A$,
as desired.
By way of contradiction, assume that $HL$ is properly contained in $A$.
As $H$ is an open subgroup of $A=\Cr_{i\in I} \, \langle a_i \rangle$, it contains a subgroup of the
form $N=\Cr_{i\in I} \, N_i$, where $N_i=\langle a_i \rangle$ for every $i$ outside a finite subset
$S$ of $I$.
Since $N\le HL<A$, we must have $a_s\not\in HL$ for some $s\in S$.
Now, by assumption, $\langle a_s \rangle$ is a normal subgroup of $G$ of prime order.
It follows that $\langle a_s \rangle L\trianglelefteq G$ and also that $HL\cap \langle a_s \rangle=1$.
The latter condition, together with $H\cap L=1$, implies that $H\cap \langle a_s \rangle L=1$.
Thus we obtain a contradiction with the maximality of $L$ in the set $\LL$, since $a_s\not\in L$.
This proves the claim and completes the proof of the theorem.
\end{proof}

As mentioned in the introduction, there exist profinite-$C$ groups with elements of infinite order and,
as a consequence, a profinite-$C$ group need not be a $C$-group.
The following example shows that even a torsion profinite-$C$ group need not be a $C$-group.

\begin{example}
Let $p$ and $q$ be prime numbers such that $p\equiv 1\pmod q$.
For every $n\in\N$, let $G_n=\langle x_n,y_n \rangle$ be a non-abelian group of order $pq$, where $x_n$ is of order $p$, $y_n$ is of order $q$, and $\langle x_n \rangle \trianglelefteq G_n$.
Then $G_n$ is a $C$-group and, by \cref{cartesian product of profinite-C}, the cartesian product $G=\Cr_{n\in\N} \, G_n$ is a profinite-$C$ group.
Also, $G$ has finite exponent $pq$.
Assume, by way of contradiction, that $G$ is a $C$-group.
By Lemmas 3.1.7 and 3.2.3 of \cite{Sch}, the abelian normal subgroup $A=\Cr_{n\in\N} \, \langle x_n \rangle$ of $G$ is a direct product of minimal normal subgroups of $G$, and these are cyclic of prime order.
We claim that the only cyclic subgroups of $A$ that are normal in $G$ are the $\langle x_n \rangle$ for $n\in\N$.
It then follows that $A$ is a direct product of countably many cyclic subgroups and, as a consequence, $A$ is countable.
This is a contradiction.

Let us prove the claim of the previous paragraph.
Suppose that $g=(g_n)_{n\in\N}$ is an element of $A$ with at least two non-trivial components, say $g_m$ and $g_n$, and let
$y$ be the tuple having $y_n$ in the $n$th component and $1$ elsewhere.
Then the conjugate $g^y$ has $g_m$ in the $m$th component and a power of $g_n$ different from $g_n$ in the $n$th
component.
It follows that $g^y$ cannot be a power of $g$ and consequently $\langle g \rangle \not\trianglelefteq G$, as desired.
\end{example}

Thus the following question arises: what torsion profinite-$C$ groups are $C$-groups?
Our final goal is to prove Theorem C, which gives an answer to this question.
We start by showing two cases in which a torsion profinite-$C$ group is a $C$-group, namely
abelian groups and groups in which the closure $\overline{G'}$ of the derived subgroup is open in $G$.

\begin{proposition}
\label{torsion abelian}
Let $G$ be an abelian profinite-$C$ group.
Then the following are equivalent:
\begin{enumerate}
\item 
$G$ is a $C$-group.
\item
$G$ is a torsion group.
\item
$G$ has squarefree finite exponent.
\end{enumerate}
\end{proposition}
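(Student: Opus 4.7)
The plan is to combine \cref{characterisation abelian profinite-C} with \v{C}ernikova's characterisation (recalled in the introduction) of abelian $C$-groups as direct products of cyclic groups of prime order. By that corollary I may write the abelian profinite-$C$ group as an internal cartesian product $G=\Cr_{i\in I} \, \langle a_i\rangle$, with each $a_i$ of some prime order $p_i$, and then match this description against \v{C}ernikova's result.

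The implication (i)$\Rightarrow$(ii) is immediate, since \v{C}ernikova's description makes every $C$-group torsion.

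For (ii)$\Rightarrow$(iii) I would argue contrapositively. If the set of primes $\{p_i\mid i\in I\}$ were infinite, I could pick indices $i_1,i_2,\dots\in I$ with the $p_{i_k}$ pairwise distinct and form the element $g=(g_i)_{i\in I}\in G$ whose only non-trivial components are $g_{i_k}=a_{i_k}$. Any relation $g^n=1$ with $n\in\N$ would force $p_{i_k}\mid n$ for every $k$, which is impossible. Hence $g$ has infinite order, contradicting that $G$ is torsion. So only finitely many distinct primes $p_1,\dots,p_k$ occur among the $p_i$, and the exponent of $G$ is the squarefree number $p_1\cdots p_k$.

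For (iii)$\Rightarrow$(i), write the exponent as $n=p_1\cdots p_k$. Since $G$ is abelian and only finitely many primes divide $n$, I get a Sylow decomposition $G=G_{p_1}\times\cdots\times G_{p_k}$. Each $G_{p_j}$ has exponent $p_j$, so is an $\mathbb{F}_{p_j}$-vector space; picking an $\mathbb{F}_{p_j}$-basis via Zorn's lemma exhibits $G_{p_j}$, as an abstract group, as a direct sum of copies of $C_{p_j}$. Therefore $G$ is abstractly a direct sum of cyclic groups of prime order and, by \v{C}ernikova's theorem, a $C$-group. The only mild subtlety is that in this last step the basis produced by Zorn's lemma, and hence the resulting direct-sum decomposition, will in general fail to respect the profinite topology; this is harmless, because being a $C$-group is a property of the underlying abstract group.
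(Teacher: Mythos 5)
Your proposal is correct and follows essentially the same route as the paper: both start from \cref{characterisation abelian profinite-C}, observe that torsion is equivalent to only finitely many primes occurring (hence to squarefree finite exponent), and then deduce (i) from (iii) by passing to an abstract decomposition into cyclic groups of prime order and invoking \v{C}ernikova. The only cosmetic difference is that the paper cites the structure theorem for abelian groups of finite exponent \cite[4.3.5]{R} where you argue directly with $\mathbb{F}_{p}$-bases of the Sylow subgroups; your closing remark that the resulting decomposition need not be topological but that this is harmless is exactly the right observation.
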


\begin{proof}
First of all, since $G$ is an abelian profinite-$C$ group, by \cref{characterisation abelian profinite-C} we can write 
$G=\Cr_{i\in I} \, \langle x_i \rangle$, where every $x_i$ is of prime order, say $p_i$.
As a consequence, $G$ is a torsion group if and only if the set $\{p_i\mid i\in I\}$ is finite, i.e.\ if
and only if $G$ has (squarefree) finite exponent.
Since $C$-groups are torsion, this proves in particular that (i) implies (iii).
Let us now see that (iii) implies (i).
Since $G$ is abelian of squarefree finite exponent then, by \cite[4.3.5]{R}, $G$ can be expressed as a direct product
of cyclic groups, each of squarefree order. 
Then $G$ can be further decomposed into a direct product of cyclic subgroups of prime order, and so
$G$ is a $C$-group by \v{C}ernikova's result in \cite{C}.
\end{proof}

\begin{proposition}
\label{torsion G' open}
Let $G$ be a profinite-$C$ group such that $\overline{G'}$ is an open subgroup of $G$.
Then $G$ is a $C$-group if and only if $G$ is a torsion group.
\end{proposition}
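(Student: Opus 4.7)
The plan is to handle the two directions separately. The forward direction ``$C$-group $\Rightarrow$ torsion'' is immediate from the definition of a $C$-group, so all the work lies in the converse; assume $G$ is torsion.

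By Theorem B I may write $G = B \ltimes A$ with $A = \overline{G'} = \Cr_{i\in I}\,\langle a_i\rangle$ and $B = \Cr_{j\in J}\,\langle b_j\rangle$, where all the generators have prime order and each $\langle a_i\rangle$ is normal in $G$. The hypothesis that $\overline{G'}$ is open forces $|G:A|<\infty$, so $B\cong G/A$ is finite. Applying \cref{torsion abelian} to the torsion abelian profinite-$C$ group $A$ gives that $A$ has squarefree finite exponent $n=p_1\cdots p_k$. Hence $A$ splits as the finite (algebraic) direct product $A=A_{p_1}\times\cdots\times A_{p_k}$ of its $p$-primary parts, each $A_p$ being an $\mathbb{F}_p$-vector space (in general of infinite dimension) with a continuous $B$-action by conjugation.

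The heart of the proof will be upgrading the topological cartesian decomposition $A_p=\Cr_{i:\,|a_i|=p}\,\langle a_i\rangle$ supplied by Theorem B into an \emph{algebraic} direct decomposition into $G$-invariant lines, as required by \v{C}ernikova's criterion recalled in the introduction. For each $i$ with $|a_i|=p$, the action of $B$ on the $1$-dimensional $\mathbb{F}_p$-space $\langle a_i\rangle$ is described by a character $\chi_i:B\to\mathbb{F}_p^{\times}$; since $B$ is finite, only finitely many such characters can appear. Grouping the generators according to their character yields a \emph{finite} decomposition $A_p=\Cr_\chi A_{p,\chi}$, which is therefore simultaneously topological and algebraic. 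On each block $A_{p,\chi}$ the group $B$ acts as the scalar $\chi$, so every $\mathbb{F}_p$-subspace of $A_{p,\chi}$ is automatically $B$-invariant; in particular, any Hamel basis of $A_{p,\chi}$ (which exists by Zorn's lemma) realises $A_{p,\chi}$ as an algebraic direct product of $1$-dimensional $B$-invariant subspaces. Assembling over all $p$ and $\chi$ exhibits $A$ as an algebraic direct product of cyclic subgroups of prime order, each of them normal in $G$.

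Since $B$ is finite and hence trivially an algebraic direct product of cyclic groups of prime order, $G=B\ltimes A$ satisfies \v{C}ernikova's hypothesis, and we conclude that $G$ is a $C$-group. The main obstacle is the one addressed in the previous paragraph: a cartesian product of closed normal lines is not algebraically a direct product when the index set is infinite, and moving from a topological to an algebraic direct decomposition requires exploiting the finiteness of $B$---equivalently, of the set of characters $B\to\mathbb{F}_p^{\times}$---to reduce to a scalar $B$-action on each block, where the axiom of choice then supplies the required basis.
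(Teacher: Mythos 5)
Your proposal is correct and follows essentially the same route as the paper: reduce to finitely many primes via the squarefree exponent, observe that the finite complement $B$ admits only finitely many characters into $\mathbb{F}_p^{\times}$, group the cartesian factors of $\overline{G'}$ by character to get a finite (hence algebraic) decomposition, and use the scalar action on each block to pick a basis of $G$-invariant lines before invoking \v{C}ernikova's criterion. The only cosmetic difference is that you derive the primary decomposition from \cref{torsion abelian} rather than from the Sylow decomposition of an abelian profinite group, which amounts to the same thing.
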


\begin{proof}
Assume that $G$ is profinite-$C$ and torsion, and that $\overline{G'}$ is open.
By Theorem B, we can write $G=K\ltimes \overline{G'}$ for some finite subgroup $K$ of $G$ that is a direct
product of groups of prime order.
In order to prove that $G$ is a $C$-group, it suffices to see that $\overline{G'}$ is a direct product of
$G$-invariant cyclic subgroups of prime order.

By \cite[Proposition 2.4.3]{W}, we can decompose the abelian group $\overline{G'}$ as the cartesian product of
its Sylow pro-$p$ subgroups for $p\in\pi(G)$.
Since $\overline{G'}$ is torsion, the set $\pi(G)$ is finite, and we actually have a direct product decomposition.
Hence it suffices to see that, for every $p\in\pi(G)$, the Sylow pro-$p$ subgroup $P$ of $G$ decomposes
as a direct product of $G$-invariant subgroups of order $p$.
By \cref{structure of closed abelian normal}, we can write $P=\Cr_{i\in I} \, \langle x_i \rangle$ as an internal
cartesian product, where each $\langle x_i \rangle$ is of order $p$ and normal in $G$.

For every $i\in I$, the action of $K$ on $\langle x_i \rangle$ by conjugation induces a group homomorphism
$\theta_i:K\longrightarrow \UU(\Z/p\Z)$, given by
\[
x_i^k = x_i^{\theta_i(k)}, \quad \text{for every $k\in K$.}
\]
Now we define an equivalence relation $\sim$ on $I$ by letting $i\sim j$ when $\theta_i=\theta_j$.
Since $K$ is finite, the set $\CC$ of equivalence classes for $\sim$ in $I$ is finite.
For every $C\in\CC$, let
\[
P_C = \Cr_{i\in C} \, \langle x_i \rangle.
\]
Then we have
\begin{equation}
\label{direct product of the P_C}
P = \Dr_{C\in\CC} \, P_C.
\end{equation}

On the other hand, for each choice of $C\in\CC$ and $k\in K$, let $\lambda$ be the common value of $\theta_i(k)$
as $i$ runs over $C$.
Then for every $g\in P_C$ we have $g^k=g^{\lambda}$, and consequently $\langle g \rangle\trianglelefteq G$.
Since $P_C$ is an elementary abelian $p$-group, it follows that we can write $P_C$ as the direct product
of normal subgroups of $G$ of order $p$.
By \eqref{direct product of the P_C}, the same is true for $P$, which completes the proof.
\end{proof}

\begin{proof}[Proof of Theorem C]
First of all, since $G$ is profinite-$C$, we can write $G=K\ltimes \overline{G'}$ as in Theorem B.
Since $K$ and $\overline{G'}$ are both abelian, we have $Z(G)=(Z(G)\cap K)\times (Z(G)\cap \overline{G'})$.
Consequently
\begin{equation}
\label{index Z(G)G'}
|G:Z(G)\overline{G'}|=|K:Z(G)\cap K|.
\end{equation}

(i)$\Rightarrow$(ii).
We are assuming that $G$ is a $C$-group.
Since $\overline{G'}$ and $K$ are abelian $C$-groups, they have finite exponent by \cref{torsion abelian}.
Hence both these subgroups decompose as a direct product of finitely many non-trivial Sylow subgroups:
$\overline{G'}=P_1 \times \cdots \times P_r$ and $K=Q_1 \times \cdots \times Q_s$.

Pick $P\in\{P_1,\ldots,P_r\}$ and $Q\in\{Q_1,\ldots,Q_s\}$ arbitrarily, and let us examine the
subgroup $Q\ltimes P$ of $G$.
By \cref{structure of closed abelian normal}, we can write $P=\Cr_{i\in I} \, \langle x_i \rangle$, where
each $\langle x_i \rangle$ is a normal subgroup of $G$ of prime order, say $p$.
As in the proof of \cref{torsion G' open}, we define the homomorphism $\theta_i:Q\longrightarrow \UU(\Z/p\Z)$
induced by conjugation of $Q$ on $\langle x_i \rangle$, and we consider the equivalence relation on $I$ given by
$i\sim j$ if and only if $\theta_i=\theta_j$.
Let $\CC$ be the set of equivalence classes of $\sim$ in $I$, and set
$P_C=\Cr_{i\in C} \, \langle x_i \rangle$.

For every $C\in\CC$, let us choose an index $i(C)\in C$.
Then we can write $P_C=\langle x_{i(C)} \rangle \times N_{i(C)}$, where
\[
N_{i(C)} = \Cr_{\substack{i\in C \\ i\ne i(C)}} \langle x_i \rangle \trianglelefteq G.
\]
Set $N=\Cr_{C\in\CC} \, N_{i(C)} \trianglelefteq_{c} G$.
Then $(Q\ltimes P)/N$ is naturally isomorphic to the semidirect product $S=Q\ltimes R$, where
$R=\Cr_{C\in\CC} \, \langle x_{i(C)} \rangle$,
and the action of $Q$ on each $\langle x_{i(C)} \rangle$ is given by $\theta_{i(C)}$.

We claim that the only cyclic subgroups of $R$ that are normal in $S$ are the $\langle x_{i(C)} \rangle$.
Indeed, assume that $\langle y \rangle\trianglelefteq S$, with $y=(y_C)_{C\in\CC}$ and $y_C\in \langle x_{i(C)} \rangle$.
If two entries in this tuple, say $y_C$ and $y_D$, are different from $1$, then since $C$ and $D$ are different
equivalence classes for $\sim$, there exists $z\in Q$ such that $y_C^z=y_C^m$, $y_D^z=y_D^n$ and $m\not\equiv n\pmod p$.
But then $y^z$ cannot be a power of $y$ and $\langle y \rangle \not\trianglelefteq S$.
This proves the claim.

Now since $S\cong (Q\ltimes P)/N$ is a $C$-group, its normal abelian subgroup $R$ is a direct product of cyclic normal subgroups
of order $p$ that are normal in $S$, by Lemma 3.1.7 and Lemma 3.2.3 of \cite{Sch}.
By the last paragraph, $R$ is the direct product of some of the subgroups $\langle x_{i(C)} \rangle$ with $C\in\CC$.
If $\CC$ is infinite then
\[
|R| \le |\Dr_{C\in\CC} \, \langle x_{i(C)} \rangle| \le |\CC| < 2^{|\CC|}
= |\Cr_{C\in\CC} \, \langle x_{i(C)} \rangle| = |R|,
\]
which is a contradiction.
Thus $\CC$ is necessarily finite.

It follows that
\[
C_Q(P) = \bigcap_{i\in I} \, \ker\theta_i = \bigcap_{C\in\CC} \, \ker\theta_{i(C)}
\]
has finite index in $Q$, since $Q/\ker\theta_{i(C)}$ embeds into $\UU(\Z/p\Z)$ and $\CC$ is finite.
Hence
\[
C_Q(\overline{G'}) = \bigcap_{i=1}^r \, C_Q(P_i)
\]
has also finite index in $Q$, and consequently
\[
C_K(\overline{G'}) = C_{Q_1}(\overline{G'}) \times \cdots \times C_{Q_s}(\overline{G'})
\]
has finite index in $K$.
Since $C_K(\overline{G'})=Z(G)\cap K$, this completes the proof of this implication, by \eqref{index Z(G)G'}.

(ii)$\Rightarrow$(i).
Set $L=Z(G)\cap K$.
Since $K$ is profinite-$C$, we can write $K=L\times M$ for some $M\le_{c} G$.
By hypothesis and \eqref{index Z(G)G'}, we have $|M|=|K:L|<\infty$.
Then we can write $G=H\times L$, where $H=M\ltimes \overline{G'}$.
Now note that $H$ is profinite-$C$ and torsion, and since $\overline{H'}$ coincides with $\overline{G'}$,
it follows that $\overline{H'}$ is open in $H$.
By \cref{torsion G' open}, $H$ is a $C$-group.
On the other hand, the torsion abelian profinite-$C$ group $L$ is a $C$-group by \cref{torsion abelian}.
Since the direct product of $C$-groups is a $C$-group, we conclude that $G$ is a $C$-group.
\end{proof}

We conclude with a remark about another property regarding the existence of complements. 
In the abstract setting, a group $G$ is said to be an $SC$-group if for every subgroup $H$ of $G$ there exists a subgroup $K$ of $G$ such that $J\cap K$ is a (not necessarily permutable) complement of $H$ in $J$ whenever $H\le J\le G$.
It is then natural to define a \emph{profinite-$SC$ group} as a profinite group $G$ in which the condition above
is fulfilled under the restriction that $H$ and $J$ should be closed in $G$.
Every $C$-group is an $SC$-group, but the converse does not hold, as is shown by the example of Tarski monsters.
Actually, Emaldi proved in \cite{E} that the class of $C$-groups coincides with that of locally finite $SC$-groups.
Now it is easy to see that a quotient of a profinite-$SC$ group by a closed normal subgroup is again profinite-$SC$.
In particular, a profinite-$SC$ group is an inverse limit of finite $SC$-groups, and then the following result
immediately follows.

\begin{proposition}
Let $G$ be a profinite group.
Then $G$ is a profinite-$SC$ group if and only if it is a profinite-$C$ group.
\end{proposition}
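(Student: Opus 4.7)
The plan is to prove both implications, using the two remarks that immediately precede the proposition as the main engine for the harder direction.

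For the direction profinite-$C$ $\Rightarrow$ profinite-$SC$, I would argue directly. Let $H$ be a closed subgroup of $G$ and take $K$ to be a closed permutable complement of $H$ in $G$, which exists by hypothesis. For an arbitrary closed $J$ with $H\le J\le G$, Dedekind's modular law gives
\[
J = J\cap G = J\cap HK = H(J\cap K),
\]
and $H\cap (J\cap K)=H\cap K=1$, so $J\cap K$ is a permutable (hence in particular, a not-necessarily-permutable) complement of $H$ in $J$. Thus the same $K$ witnesses the profinite-$SC$ condition for $H$, so $G$ is profinite-$SC$.

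For the converse, I would invoke the two facts flagged in the preceding paragraph of the paper. First, since the quotient of a profinite-$SC$ group by a closed normal subgroup is again profinite-$SC$, every finite quotient $G/N$ with $N\in\NN$ is a finite $SC$-group. Second, Emaldi's theorem from \cite{E} tells us that $C$-groups are exactly the locally finite $SC$-groups; in particular, every finite $SC$-group is a finite $C$-group. Consequently each $G/N$ is a finite $C$-group, and since $G=\varprojlim G/N$, $G$ is an inverse limit of finite $C$-groups. By the implication (vi)$\Rightarrow$(i) of Theorem A, $G$ is profinite-$C$.

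There is essentially no obstacle here: the forward direction is a one-line application of the modular law, and the reverse direction is a direct assembly of the already-stated remark about quotients of profinite-$SC$ groups, Emaldi's classification of finite $SC$-groups, and condition (vi) of Theorem A. The only point worth checking carefully is the quotient claim cited from the remark, where one verifies that if $K$ witnesses the profinite-$SC$ condition for a closed subgroup $H\ge N$ of $G$, then $KN/N$ witnesses it for $H/N$ in $G/N$; this is another straightforward modular-law computation showing $H\cap KN\le N$.
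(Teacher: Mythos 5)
Your proof is correct and follows essentially the same route the paper intends: the easy direction via the modular law (which is in effect Lemma~\ref{subgroups and quotients of profinite-C}(i)), and the converse by passing to finite quotients, applying Emaldi's identification of finite $SC$-groups with finite $C$-groups, and invoking condition (vi) of Theorem~A. No issues.
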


\subsection*{Acknowledgements} 
The second author wishes to thank the University of the Basque Country for its kind hospitality during the visit in which this work was initiated.

\end{document}